\documentclass[11pt,bezier]{article}
\usepackage{amsmath, amssymb, amsfonts, graphicx}

\textwidth = 16 cm \textheight = 23 cm \oddsidemargin =2.5 mm
\evensidemargin = 0 cm \topmargin = -1.5 cm
\parskip = 1.5 mm

\newtheorem{prethm}{{\bf Theorem}}

\newenvironment{thm}{\begin{prethm}\sl{\hspace{-0.5
               em}{\bf.}}}{\end{prethm}}

\newtheorem{prepro}[prethm]{{\bf Proposition}}

\newtheorem{prelem}[prethm]{{\bf Lemma}}

\newenvironment{lem}{\begin{prelem}\sl{\hspace{-0.5
               em}{\bf.}}}{\end{prelem}}

\newtheorem{predeff}[prethm]{{\bf Definition}}

\newtheorem{precor}[prethm]{{\bf Corollary}}

\newtheorem{preconj}[prethm]{{\bf Conjecture}}

\newenvironment{conj}{\begin{preconj}\sl{\hspace{-0.5
               em}{\bf.}}}{\end{preconj}}

\newtheorem{preremark}[prethm]{{\bf Remark}}

\newtheorem{preexample}[prethm]{{\bf Example}}

\newtheorem{preproof}{{\bf\textsf{Proof.}}}

\newenvironment{proof}[1]{\begin{preproof}{\rm
               #1}\hfill{$\Box$}}{\end{preproof}}

\newcommand{\la}{\lambda}

\title{  Proof of a conjecture on `plateaux' phenomenon of graph Laplacian eigenvalues}

\author{{\sc Ebrahim Ghorbani} \\[.3cm]
{\sl Department of Mathematics, K.N. Toosi University of Technology,}\\
{\sl P. O. Box 16315-1618, Tehran, Iran}\\
{\sl School of Mathematics, Institute for Research in Fundamental
Sciences (IPM),}\\
{\sl P.O. Box
19395-5746, Tehran, Iran }
\\[.3cm]
$\mathsf{e\_ghorbani@ipm.ir}$ }


\begin{document}
\maketitle

\vspace{5mm}

\begin{abstract}
Let $G$ be a simple graph. A pendant path of $G$ is a path  such that one of
its end vertices has degree $1$, the other end has degree $\ge3$, and all the internal vertices have degree $2$.  Let
$p_k(G)$ be the number of pendant paths of length $k$ of $G$, and $q_k(G)$ be the number of vertices with degree $\ge3$ which are an end vertex of some pendant paths of length $k$.
 Motivated by the problem of characterizing dendritic trees, N. Saito and E. Woei conjectured that any graph $G$ has some Laplacian eigenvalue with multiplicity at least $p_k(G)-q_k(G)$. 
We prove a more general result for both Laplacian and signless Laplacian eigenvalues from which the conjecture follows.

\vspace{5mm}
\noindent {\bf Keywords:} Laplacian eigenvalue, Pendant path, Signless Laplacian eigenvalue \\[.1cm]
\noindent {\bf AMS Mathematics Subject Classification\,(2010):}   05C50, 05C38
\end{abstract}

\vspace{5mm}

\section{Introduction}
Let $G$ be a simple graph. A {\em pendant path} of $G$ is a path such that one of
its end vertices has degree $1$, the other end has degree $\ge3$, and all the internal vertices have degree $2$.  Let
$p_k(G)$ denote the number of pendant paths of length $k$ of $G$, and $q_k(G)$ denote the number of vertices with degree $\ge3$ which are an end vertex of some pendant paths of length $k$.
Saito and  Woei \cite{sw} studied Laplacian eigenvalues of dendritic trees. They observed  eigenvalue(s) `plateaux' (i.e., set of eigenvalue(s) with multiplicity) in the Laplacian eigenvalues of such trees. More generally, they showed that $(3\pm\sqrt5)/2$ is a Laplacian eigenvalue of any graph $G$ with multiplicity at least
$p_2(G)-q_2(G)$. This motivated them to put forward the following conjecture.

\begin{conj}\label{conj} {\rm(\cite{sw})} For any positive integer $k$, any graph $G$ has some Laplacian eigenvalue with multiplicity at least $p_k(G)-q_k(G)$.
\end{conj}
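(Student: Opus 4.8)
The plan is to produce, for each positive integer $k$, an explicit $k\times k$ symmetric tridiagonal matrix $M_k$ with the property that \emph{every} eigenvalue $\mu$ of $M_k$ occurs as an eigenvalue of both the Laplacian $L(G)$ and the signless Laplacian $Q(G)=D(G)+A(G)$ with multiplicity at least $p_k(G)-q_k(G)$; Conjecture~\ref{conj} then follows by choosing any eigenvalue of $M_k$. The matrix $M_k$ is the principal submatrix of the Laplacian of the path $P_{k+1}$ obtained by deleting the row and column of one of its two leaves, i.e.\ the tridiagonal matrix with diagonal $(2,2,\dots,2,1)$ and all off-diagonal entries equal to $-1$. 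Its spectrum consists of the $k$ numbers $4\sin^2\!\big(\tfrac{(2j-1)\pi}{2(2k+1)}\big)$, $j=1,\dots,k$; equivalently, these are the eigenvalues of $L(P_{2k+1})$ whose eigenvectors are antisymmetric about the central vertex. In particular, for $k=2$ they are $(3\pm\sqrt5)/2$, so the construction specializes exactly to the Saito--Woei observation.

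First I would record the structural lemma that any two distinct pendant paths of $G$ of the same length $k$ are vertex-disjoint, except that they may share their common endpoint of degree $\ge 3$. This is immediate from the fact that a vertex of degree $2$ forces the continuation of a pendant path in both directions, so a pendant path is determined by any one of its internal vertices (its two extreme vertices are necessarily the leaf and the unique vertex of degree $\ge3$). Hence, writing $m_v$ for the number of pendant paths of length $k$ whose degree-$\ge3$ endpoint is $v$, these $m_v$ paths meet pairwise only at $v$; summing over the $q_k(G)$ vertices $v$ with $m_v\ge1$ gives $\sum_v m_v=p_k(G)$ and $\sum_v 1=q_k(G)$, hence $\sum_v(m_v-1)=p_k(G)-q_k(G)$.

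Next, fix an eigenvalue $\mu$ of $M_k$ with eigenvector $(f_1,\dots,f_k)$, and set $f_0:=0$; note $f_1\ne0$ since $M_k$ is a Jacobi matrix with nonzero off-diagonals. Fix a vertex $v$ of degree $\ge3$ with pendant paths $P^{(1)},\dots,P^{(m_v)}$ of length $k$, where $P^{(t)}$ has vertices $v=v^{(t)}_0,v^{(t)}_1,\dots,v^{(t)}_k$ with $v^{(t)}_k$ a leaf. For scalars $c_1,\dots,c_{m_v}$ with $\sum_t c_t=0$, define $x\in\mathbb{R}^{V(G)}$ by $x(v^{(t)}_i)=c_t f_i$ for all $t,i$ and $x=0$ on all other vertices (this is consistent at $v$, where $c_t f_0=0$). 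A local check shows $L(G)x=\mu x$: at an internal vertex $v^{(t)}_i$ ($1\le i\le k-1$) it is the recurrence $2f_i-f_{i-1}-f_{i+1}=\mu f_i$; at a leaf $v^{(t)}_k$ it is $f_k-f_{k-1}=\mu f_k$; at $v$ it reads $-f_1\sum_t c_t=0$, using $\sum_t c_t=0$; and at every other vertex $w$ both sides vanish, because $x(w)=0$ and every neighbour of $w$ carrying a nonzero value of $x$ would be an internal or leaf vertex of some pendant path, forcing $w$ itself onto that path or making $w$ its degree-$\ge3$ endpoint (contradiction, resp.\ the case already treated). The assignment $(c_1,\dots,c_{m_v})\mapsto x$ is linear and injective on $\{\sum_t c_t=0\}$ (as $f_1\ne0$), so it yields an $(m_v-1)$-dimensional space of $\mu$-eigenvectors of $L(G)$ supported on $\bigcup_t V(P^{(t)})\setminus\{v\}$. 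Since these supports are pairwise disjoint as $v$ varies, the spaces are linearly independent, and $\mu$ has multiplicity at least $\sum_v(m_v-1)=p_k(G)-q_k(G)$ in $L(G)$. For $Q(G)$ the identical argument works with $x(v^{(t)}_i)=(-1)^i c_t f_i$: conjugating $M_k$ by $\mathrm{diag}\big((-1)^i\big)$ turns its off-diagonal entries into $+1$, which is exactly the matrix governing the signless-Laplacian recurrence along a path, while the equations at $v$ and off the pendant paths are unchanged.

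I expect the substance to be routine once this scheme is in place, since all verifications are local; the one point requiring a little care is the final linear-independence count—checking that within the paths at a single $v$ the space of admissible $x$ has dimension exactly $m_v-1$, and that eigenvectors attached to distinct vertices cannot interfere—which the disjoint-support lemma settles cleanly. Computing the spectrum of $M_k$ explicitly (a standard tridiagonal-determinant computation, or a reflection argument on $P_{2k+1}$) I would simply quote or relegate to a short lemma, and I see no serious analytic obstacle beyond this bookkeeping.
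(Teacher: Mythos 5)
Your proposal is correct, and it reaches the conclusion by a genuinely different and more elementary route than the paper. The paper never writes down an eigenvector: it passes to the subdivision graph $\widetilde G$, uses the identity $\det\begin{pmatrix}\la I&M\\M^\top&\la I\end{pmatrix}=\la^{m-n}\det(\la^2I-MM^\top)$ to trade eigenvalues of $L(G)$ and $Q(G)$ for eigenvalues of the block matrices built from $D$ and $X$, observes that deleting the $q$ attachment vertices from $\widetilde G$ leaves $p$ components isomorphic to $P_{2k-1}$ each carrying the eigenvalue $2\cos(\pi i/(2k+1))$, and then invokes Cauchy interlacing to push multiplicity $p$ down to $p-q$; the Laplacian case additionally needs a lemma that for bipartite pieces the signed and unsigned incidence block matrices have similar principal submatrices. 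You instead exhibit the $(p_k-q_k)$-dimensional eigenspace directly: the Dirichlet--Neumann vector $f$ on a single pendant path, replicated over the $m_v$ paths at a branch vertex $v$ with coefficients summing to zero, with the local verifications at internal vertices, leaves, $v$, and all other vertices all checking out (your disjointness lemma for pendant paths of equal length is what makes the supports independent, and $f_1\ne0$ is what makes the parametrization injective). The sign-alternation trick for $Q(G)$ is the standard substitute for the paper's bipartite-similarity lemma. Your eigenvalues $4\sin^2\bigl((2j-1)\pi/(2(2k+1))\bigr)$ coincide with the paper's $4\cos^2\bigl(\pi i/(2k+1)\bigr)$ under $i=k+1-j$, and as you note the conjecture does not even require computing them. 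What your approach buys is self-containedness (no interlacing, no subdivision graph, no incidence matrices) and explicit eigenvectors localized at each branch vertex; what the paper's buys is a uniform mechanism treating $L$ and $Q$ simultaneously and a reusable ``delete $q$ vertices, lose at most $q$ from the multiplicity'' template. The only point worth tightening in a written version is the degenerate case $k=1$ (no internal vertices), where the disjointness of distinct pendant edges should be argued via the leaves rather than via internal vertices; this is immediate.
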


 We remark that the special cases $k=1$ follows from a result of \cite{f} (see also \cite{m}) asserting that multiplicity of $1$ as a Laplacian eigenvalue of a graph $G$ is at least  $p_1(G)-q_1(G)$.
We prove a more general result (Theorem~\ref{main} below) for both Laplacian and signless Laplacian eigenvalues from which Conjecture~\ref{conj} follows.

\section{Preliminaries}

Let $G$ be a simple graph with vertex set $ \{v_1, \ldots , v_n\}$ and edge set $ \{e_1, \ldots , e_m\}$.
 The {\em adjacency matrix} of $G$ is an $n \times  n$
 matrix $A=A(G)$ whose $(i, j)$-entry is $1$ if $v_i$ is adjacent to $v_j$ and  $0$ otherwise.
The {\em incident matrix} of
$G$, $X=X(G)=(x_{ij})$, is an $n\times m$ matrix whose rows and columns  are indexed by
vertex set and edge set of $G$, respectively, where
$$x_{ij}=\left\{\begin{array}{ll}
1 &  \hbox{if  $e_j$  is   incident with $v_i$,}\\
0 & \hbox{otherwise}.\\
\end{array}\right.$$
If we orient the edges of $G$, we may define similarly the {\em directed incidence matrix}  $D=D(G)=(d_{ij})$, with respect to the particular orientation,
 as
$$d_{ij}=\left\{\begin{array}{ll}
+1 & \hbox{if $e_j$ is  an incoming  edge to $v_i$,}\\
-1 & \hbox{if $e_j$ is  an outgoing  edge from $v_i$,}\\
0 & \hbox{otherwise.}
\end{array}\right.$$
The matrices $L(G):=DD^\top$ and $Q(G):=XX^\top$ are called the {\em Laplacian matrix} and {\em signless Laplacian matrix} of $G$, respectively.
 It is easily  seen that $L(G)=\Delta-A$  and $Q(G)=\Delta+A$
where $\Delta$ is the diagonal matrix of vertex degrees of $G$.

For any $n\times m$ matrix $M$, and nonzero real $\la$, we have
$$\det\begin{pmatrix}\la I_n&M\\M^\top&\la I_m\end{pmatrix}=\la^{m-n}\det(\la^2 I_n-MM^\top).$$
So we come up with the following lemma.
\begin{lem}\label{subdivid} For any graph $G$, $\la_1,\ldots,\la_r$ are all the nonzero eigenvalues of $Q(G)$ (resp. $L(G)$) if and only if $\pm\sqrt{\la_1},\ldots,\pm\sqrt{\la_r}$ are all the nonzero eigenvalues of $\begin{pmatrix}O&D\\D^\top&O\end{pmatrix}$ (resp. $\begin{pmatrix}O&X\\X^\top&O\end{pmatrix}$).
\end{lem}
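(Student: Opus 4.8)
The plan is to apply the displayed determinant identity twice: once with $M=D$, the directed incidence matrix, so that $MM^\top=DD^\top=L(G)$; and once with $M=X$, the incidence matrix, so that $MM^\top=XX^\top=Q(G)$. Thus it is enough to prove, for an arbitrary real $n\times m$ matrix $M$, that the nonzero eigenvalues of $MM^\top$, counted with multiplicity, are $\lambda_1,\dots,\lambda_r$ if and only if the nonzero eigenvalues of $N:=\bigl(\begin{smallmatrix}O&M\\ M^\top&O\end{smallmatrix}\bigr)$, counted with multiplicity, are $\pm\sqrt{\lambda_1},\dots,\pm\sqrt{\lambda_r}$.

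First I would pin down the nonzero roots of the characteristic polynomial of $N$. Since the displayed identity holds for every $n\times m$ matrix, applying it with $-M$ in place of $M$ yields, for every nonzero $\lambda$,
$$\det(\lambda I_{m+n}-N)=\det\begin{pmatrix}\lambda I_n&-M\\ -M^\top&\lambda I_m\end{pmatrix}=\lambda^{m-n}\det(\lambda^2 I_n-MM^\top);$$
restricting to $\lambda\ne0$ costs nothing here, since only the nonzero roots matter. Writing $\mu_1,\dots,\mu_n$ for the eigenvalues of $MM^\top$ listed with multiplicity, the right-hand side is $\lambda^{m-n}\prod_{i=1}^{n}(\lambda^2-\mu_i)$. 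The prefactor $\lambda^{m-n}$ and the factors $\lambda^2-\mu_i$ with $\mu_i=0$ only affect the root $\lambda=0$, so the nonzero roots of $\det(\lambda I_{m+n}-N)$, with multiplicity, are exactly the numbers $\pm\sqrt{\mu_i}$ over those $i$ with $\mu_i\ne0$; here I use that $MM^\top$ is positive semidefinite, so each $\mu_i\ge0$ and the square roots are real, which applies in particular to $L(G)$ and $Q(G)$. Consequently the multiset of nonzero eigenvalues of $N$ equals $\{\pm\sqrt{\mu}:\mu\text{ a nonzero eigenvalue of }MM^\top\}$.

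This multiset identity is precisely the equivalence to be proved, read in its two directions: if the nonzero eigenvalues of $MM^\top$ are $\lambda_1,\dots,\lambda_r$, then those of $N$ are $\pm\sqrt{\lambda_1},\dots,\pm\sqrt{\lambda_r}$; conversely, squaring the entries of the list $\pm\sqrt{\lambda_1},\dots,\pm\sqrt{\lambda_r}$ and halving multiplicities recovers $\lambda_1,\dots,\lambda_r$, so if these $2r$ numbers are the nonzero eigenvalues of $N$ then $\lambda_1,\dots,\lambda_r$ must be the nonzero eigenvalues of $MM^\top$. I expect no genuine obstacle; the determinant identity supplies all the content, and the only thing needing (entirely routine) care is the multiplicity bookkeeping, which is legitimate because $N$, $L(G)$ and $Q(G)$ are real symmetric. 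If one prefers to avoid even that, the multiplicity count can instead be verified by hand: for each $\lambda\ne0$, the map $u\mapsto(u,\lambda^{-1}M^\top u)$ is a linear isomorphism from the $\lambda^2$-eigenspace of $MM^\top$ onto the $\lambda$-eigenspace of $N$.
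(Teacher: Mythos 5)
Your proof is correct and follows exactly the route the paper intends: the lemma is stated there as an immediate consequence of the displayed determinant identity, and you have simply filled in the (routine but worth recording) multiplicity bookkeeping, plus a clean alternative via the explicit eigenvector map $u\mapsto(u,\lambda^{-1}M^\top u)$. One small observation: you correctly pair $M=D$ with $L(G)=DD^\top$ and $M=X$ with $Q(G)=XX^\top$, whereas the lemma as printed has the two ``resp.'' clauses swapped --- an apparent typo in the paper, since the proof of the main theorem uses the pairing you chose.
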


 It is known that, $G$ is bipartite if and only if  $L(G)$ and $Q(G)$ are similar (see \cite[Prop.~1.3.10]{bh}).
 We will need the following variation of this result.
\begin{lem}\label{ddt} Let $G$ be a graph. Then $G$ is bipartite if and only if with respect to any orientation, any principal submatrix of
\begin{equation}\label{d}
\begin{pmatrix}O&D\\D^\top&O\end{pmatrix}
\end{equation}
is similar to the corresponding submatrix of \begin{equation}\label{x}\begin{pmatrix}O&X\\X^\top&O\end{pmatrix}.\end{equation}
\end{lem}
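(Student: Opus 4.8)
The plan is to prove both directions by exhibiting an explicit diagonal similarity, since the only difference between the matrices in \eqref{d} and \eqref{x} is that some entries $+1$ have been replaced by $-1$ (namely, the entries of $D$ corresponding to outgoing edges). First I would set up the correspondence: for a fixed orientation, $X$ and $|D|$ agree entrywise, so the two big matrices have the same zero pattern, and a principal submatrix of \eqref{d} is indexed by a subset $S$ of $V(G)\cup E(G)$, with the corresponding submatrix of \eqref{x} indexed by the same $S$. Write $D_S$ and $X_S$ for the resulting $|S\cap V|\times|S\cap E|$ blocks. I would look for diagonal $\pm1$ matrices $P$ (indexed by $S\cap V$) and $R$ (indexed by $S\cap E$) with $P D_S R = X_S$; then $\mathrm{diag}(P,R)$ conjugates one principal submatrix into the other, because conjugating $\begin{pmatrix}O&D_S\\D_S^\top&O\end{pmatrix}$ by $\mathrm{diag}(P,R)$ (with $P,R$ involutory) gives $\begin{pmatrix}O&PD_SR\\RD_S^\top P&O\end{pmatrix}$.

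For the forward direction, assume $G$ is bipartite with parts $U$ and $W$. Define $P$ on $S\cap V$ by $p_v=+1$ if $v\in U$ and $p_v=-1$ if $v\in W$, and set $R=I$. For an edge $e=\{u,w\}$ with $u\in U$, $w\in W$ present in $S$: in $D_S$ the two nonzero entries in column $e$ are $\pm1$ at rows $u$ and $w$, with opposite signs (one incoming, one outgoing); after multiplying row $u$ by $+1$ and row $w$ by $-1$, both become equal, hence both equal the common value of $X_S$, which is $1$, up to possibly an overall sign in that column. To fix the overall sign per column I would instead let $R$ be chosen column by column: once $P$ is fixed as above, each column of $PD_S$ has both its nonzero entries equal (to $+1$ or to $-1$), and I set $r_e=+1$ or $-1$ accordingly so that $PD_SR=X_S$ exactly. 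This handles columns indexed by edges in $S$; rows of $V\setminus(\text{endpoints in }S)$ are unconstrained. Crucially this is a \emph{single} diagonal matrix working for \emph{all} principal submatrices at once, since $P,R$ are defined on all of $V\cup E$ independently of $S$.

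For the converse, suppose $G$ is not bipartite; I would produce one principal submatrix of \eqref{d} not similar to the corresponding one of \eqref{x} by detecting an odd cycle. Take an odd cycle $C$ of length $2\ell+1$ in $G$ and let $S$ consist of the $2\ell+1$ vertices and $2\ell+1$ edges of $C$. The submatrix of \eqref{x} restricted to $S$ is $\begin{pmatrix}O&X_C\\X_C^\top&O\end{pmatrix}$ where $X_C$ is the vertex–edge incidence matrix of an odd cycle; then $X_CX_C^\top=Q(C)$, whose spectrum is known and, since $C$ is a nonbipartite connected graph, has $0$ \emph{not} as an eigenvalue, whereas $D_CD_C^\top=L(C)$ always has eigenvalue $0$ (constant vector). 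By Lemma \ref{subdivid} applied to the graph $C$, the nonzero eigenvalues of the two $S$-submatrices are $\pm\sqrt{\cdot}$ of the eigenvalues of $L(C)$ and $Q(C)$ respectively; since $L(C)$ has a zero eigenvalue and $Q(C)$ does not, the multiplicity of $0$ differs, so the $(2\cdot(2\ell+1))\times(2\cdot(2\ell+1))$ submatrices have different ranks and cannot be similar.

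The main obstacle I anticipate is purely bookkeeping: making sure the sign matrices $P,R$ are chosen uniformly (depending only on $G$ and its orientation, not on $S$) so that the similarity is simultaneous for all principal submatrices, and checking that columns/rows of $S$ that are ``dangling'' (an edge in $S$ with an endpoint not in $S$, or vice versa) cause no trouble — they don't, because each column of $D_S$ still has at most one nonzero entry in that case, and a single row scaling suffices. Everything else reduces to the standard ``bipartite $\Leftrightarrow$ signing'' argument together with the spectral fact that $L$ of a connected graph is singular while $Q$ of a connected nonbipartite graph is nonsingular.
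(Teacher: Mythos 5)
Your proof is correct. The direction ``bipartite $\Rightarrow$ similar'' is essentially the paper's argument: both use a $\pm1$ diagonal conjugation determined by the bipartition, which automatically restricts to every principal submatrix; you fold the dependence on the orientation into an edge-indexed sign matrix $R$, whereas the paper first normalizes the orientation (all edges from $V_1$ to $V_2$, noting that reorienting an edge is exactly such a row/column sign flip) and then applies only the vertex-indexed signs. Your uniformity and ``dangling edge'' checks are right, since $p_vd_{ve}r_e=x_{ve}$ holds entrywise and hence survives restriction to any $S$. Where you genuinely diverge is the converse: the paper only looks at the \emph{full} matrices \eqref{d} and \eqref{x}, squares their common nonzero eigenvalues via Lemma~\ref{subdivid} to conclude $L(G)$ and $Q(G)$ are cospectral, and then invokes \cite[Prop.~1.3.10]{bh}; you instead exhibit a concrete obstruction by restricting to the vertices and edges of an odd cycle $C$ and comparing ranks, using that $L(C)$ is singular while $Q(C)$ is not. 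Your route is more self-contained and localizes the failure to a specific principal submatrix (strictly more than the statement requires), at the cost of having to justify the nonsingularity of $Q$ on an odd cycle (immediate from $x^\top Qx=\sum_{uv\in E}(x_u+x_v)^2$); the paper's route is shorter because it outsources exactly this point to the cited proposition.
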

\begin{proof}{If \eqref{d} and \eqref{x} are similar, 
then they share the same nonzero eigenvalues, and  squaring those eigenvalues, by Lemma~\ref{subdivid}, 
lead to the nonzero eigenvalues of $L(G)$ and $Q(G)$. Now \cite[Prop.~1.3.10]{bh} implies that $G$ is bipartite.

For the converse, suppose that $G$ is bipartite with  bipartition $(V_1,V_2)$. For two different orientations of $G$, the corresponding matrices \eqref{d} are similar.
Because switching the orientation of any edge $e$ is equivalent to multiplying the column and the row  of \eqref{d} corresponding to $e$ by $-1$.
So we may assume that all the edges are oriented from $V_1$ to $V_2$.  Now we get \eqref{x} by multiplying \eqref{d} from left and right by the $\pm1$-diagonal matrix whose $-1$ entries correspond to the vertices in $V_1$. Since \eqref{x} and \eqref{d} are similar by a diagonal matrix, the corresponding principle submatrices of them are also similar.
}\end{proof}

The {\em subdivision} of $G$, denoted by $\widetilde{G}$, is the graph obtained by introducing $m$ new vertices to $G$, and replacing each edge $e_i=uv$, $i=1,\ldots,m$, by two new edges $uw_i$ and $w_iv$. It is easily seen that \eqref{x} gives $A(\widetilde{G})$. Also any orientation of $G$ induces a natural orientation on $\widetilde{G}$, where each $e_i=(u,v)$ is replaced by $(u,w_i)$ and $(w_i,v)$.  We denote this oriented graph by $\widetilde{G}_o$. The matrix \eqref{d} can be viewed as the (signed) adjacency matrix of $\widetilde{G}_o$.

\section{Main Result}

In this section we present the main result of the paper, from which Conjecture~\ref{conj} follows. We denote the path graph\footnote{Notice that here the definition of $P_k$ is different from the conventional notation where normally $P_k$ represents the path graph consisting of $k$ vertices.}of length $k$ by $P_k$.

\begin{thm}\label{main} Let $G$ be a graph. Then  $4\cos^2(\pi i/(2k+1))$ for any  $k\ge1$ and $i=1,\ldots,k$,
is both a Laplacian and a signless Laplacian eigenvalue of $G$ with multiplicity at least $p_k(G)-q_k(G)$.
\end{thm}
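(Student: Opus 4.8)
The plan is to reduce the statement about $L(G)$ and $Q(G)$ to a statement about the (signed or unsigned) adjacency matrix of the subdivision graph, where the pendant paths become pendant paths of double the length, and then exploit eigenvectors supported on these pendant paths. First I would fix $k$ and $i$ and set $\la = 4\cos^2(\pi i/(2k+1))$; note that $\sqrt{\la} = 2\cos(\pi i/(2k+1))$ is exactly an eigenvalue of the path $P_{2k}$ on $2k$ vertices (the path obtained by subdividing a path of length $k$). By Lemma~\ref{subdivid}, the nonzero eigenvalues of $Q(G)$ are the squares of the nonzero eigenvalues of the matrix \eqref{x}, which is $A(\widetilde G)$; similarly for $L(G)$ via \eqref{d}, which is the signed adjacency matrix of $\widetilde G_o$. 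So it suffices to produce, in the nullspace of $A(\widetilde G) - \sqrt\la I$ (resp.\ its signed analogue), a space of dimension at least $p_k(G) - q_k(G)$, and then conclude by squaring. Here one has to be slightly careful that $\sqrt\la \neq 0$ so that Lemma~\ref{subdivid} applies and squaring is injective on the relevant eigenvalues — but $4\cos^2(\pi i/(2k+1)) = 0$ only if $\cos(\pi i/(2k+1)) = 0$, impossible for $1 \le i \le k$, so we are fine.

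The core construction is as follows. A pendant path of length $k$ in $G$, attached at a vertex $v$ of degree $\ge 3$, becomes in $\widetilde G$ a pendant path of length $2k$, i.e.\ a path $v = u_0, u_1, \dots, u_{2k}$ where $u_1, \dots, u_{2k}$ all have degree $2$ except $u_{2k}$, which has degree $1$. I want eigenvectors of $A(\widetilde G)$ for the eigenvalue $\mu := 2\cos(\pi i/(2k+1))$ that are supported on such pendant paths and vanish at the branch vertex $v$. The standard path eigenvector: on a pendant path $u_0, \dots, u_{2k}$ assign to $u_j$ the value $\sin(j\,\pi i/(2k+1))$ for $j = 0, 1, \dots, 2k$ — no wait, I need it to vanish at $u_0 = v$ and satisfy the eigenvalue equation at $u_1, \dots, u_{2k-1}$ and at the pendant vertex $u_{2k}$; the vector $x_j = \sin(j\pi i/(2k+1))$ vanishes at $j=0$, satisfies the recurrence $\mu x_j = x_{j-1} + x_{j+1}$ internally, and at $j = 2k$ the pendant-vertex equation $\mu x_{2k} = x_{2k-1}$ holds precisely because $\sin((2k+1)\pi i/(2k+1)) = \sin(\pi i) = 0$ plays the role of the "phantom" $x_{2k+1}$. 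Because this vector vanishes at $v$, extending it by zero off the pendant path gives a genuine eigenvector of $A(\widetilde G)$: the eigenvalue equation is satisfied at every pendant-path vertex, at $v$ (both sides zero, since all pendant-path neighbours of $v$ carry value $x_1$ summed against... — here I must check the equation at $v$: the neighbours of $v$ along this path is just $u_1$ with value $x_1 \neq 0$, so the equation at $v$ is NOT automatically satisfied by a single such vector), and elsewhere (all zero).

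Hence the genuine construction must take differences: at a branch vertex $v$ which is the endpoint of $t \ge 1$ pendant paths of length $k$, I get a $t$-dimensional space of vectors supported on the union of those paths, and the $(t-1)$-dimensional subspace of those whose "weights" sum to zero consists of honest eigenvectors vanishing at $v$ (the equation at $v$ reads $\mu \cdot 0 = \sum (\text{weight}) \cdot x_1 = 0$). Summing over all branch vertices that are endpoints of at least one length-$k$ pendant path: the total dimension is $\sum_v (t_v - 1) = p_k(G) - q_k(G)$, since $\sum_v t_v = p_k(G)$ and the number of such $v$ is $q_k(G)$; the supports on distinct pendant paths are disjoint, so these eigenvectors are linearly independent. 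This gives multiplicity $\ge p_k(G) - q_k(G)$ for $\mu$ as an eigenvalue of $A(\widetilde G) = $ \eqref{x}, hence for $\la = \mu^2$ as an eigenvalue of $Q(G)$ by Lemma~\ref{subdivid}. For the Laplacian, run the identical argument with the signed adjacency matrix \eqref{d} of $\widetilde G_o$ in place of $A(\widetilde G)$: a pendant path is a tree, hence bipartite, and by Lemma~\ref{ddt} the principal submatrix of \eqref{d} indexed by such a pendant path is similar — via a $\pm1$-diagonal conjugation — to the corresponding submatrix of \eqref{x}, so the eigenvectors transform accordingly and the same count goes through; alternatively, just absorb signs into the $x_j$ directly. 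Squaring via Lemma~\ref{subdivid} then yields $\la$ as a Laplacian eigenvalue of $G$ with the same multiplicity bound.

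The main obstacle I anticipate is bookkeeping rather than conceptual: one must verify carefully that the eigenvalue equation of $A(\widetilde G)$ (and of the signed matrix) holds at \emph{every} vertex — in particular at the branch vertex $v$, which forces passing to the sum-zero subspace, and at the degree-$2$ subdivision vertices $w_i$ and the pendant vertex, which is where the precise choice $\la = 4\cos^2(\pi i/(2k+1))$ (equivalently, the boundary condition $\sin((2k+1)\theta) = 0$) is used and nothing else works. A secondary point is to confirm that pendant paths of length $k$ are vertex-disjoint except possibly sharing a branch endpoint $v$ — which is immediate from the definition, since all internal vertices have degree $2$ and the degree-$1$ endpoint is unique to its path — so that the constructed eigenvectors have essentially disjoint supports and are manifestly independent. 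There is also the minor check that when $p_k(G) - q_k(G) \le 0$ the statement is vacuous, so we may assume some $v$ is the endpoint of $t_v \ge 2$ pendant paths, which is exactly when the construction produces something nontrivial.
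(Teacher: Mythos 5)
Your proposal is correct, and it shares the paper's overall skeleton --- pass to the subdivision graph via Lemma~\ref{subdivid}, so that $2\cos(\pi i/(2k+1))$ becomes the target eigenvalue of $A(\widetilde G)$ (resp.\ of the signed matrix \eqref{d}), and use bipartiteness of the pendant paths to move between the signed and unsigned settings --- but it replaces the paper's key step by a genuinely different one. The paper deletes the $q$ branch vertices, notes that the resulting graph has $p$ components that are paths on $2k$ vertices and hence carries $\theta_i=2\cos(\pi i/(2k+1))$ with multiplicity $p$, and then invokes Cauchy interlacing to conclude the multiplicity drops by at most $q$ upon reinstating the deleted vertices. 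You instead build the eigenvectors by hand: $x_j=\sin(j\pi i/(2k+1))$ on each subdivided pendant path vanishes at the branch vertex and satisfies the eigenvalue equation at every path vertex (the pendant-vertex equation being exactly the boundary condition $\sin((2k+1)\theta)=0$), and the one linear condition per branch vertex (weights summing to zero, with signs absorbed in the Laplacian case) yields an eigenspace of dimension $\sum_v(t_v-1)=p-q$. Your route is more elementary --- no interlacing theorem --- and more informative, since it exhibits eigenvectors supported on the pendant paths; the paper's route is shorter and hides the bookkeeping at the branch vertices inside the interlacing inequality. All the delicate points you flag (the equation at the branch vertex forcing the sum-zero condition, disjointness of supports, nonvanishing of $2\cos(\pi i/(2k+1))$ for $1\le i\le k$ so that squaring via Lemma~\ref{subdivid} preserves multiplicities) are handled correctly. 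The only slip is cosmetic: subdividing a pendant path of length $k$ gives a path on $2k+1$ vertices, and it is only after removing the branch endpoint that one is left with the path on $2k$ vertices whose spectrum contains $2\cos(\pi i/(2k+1))$; your subsequent indexing is the correct one, so nothing downstream is affected.
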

\begin{proof}{For simplicity we fix $k$, and write  $p$ and $q$ for $p_k(G)$ and $q_k(G)$.
 If $p-q=0$, there is nothing to prove, so we assume that $p-q\ge1$.
 Hence there are $p$ pendant paths 
 of length $k$ which are connected to the rest of the graph via $q$ vertices $v_1,\ldots,v_q$.

We first consider signless Laplacian eigenvalues of $G$. In view of Lemma~\ref{subdivid}, it suffices to consider $A(\widetilde{G})$.
Let $\la_1\ge\cdots\ge\la_n$ be the eigenvalues of $A(\widetilde{G})$.
The graph $\widetilde{G}\setminus\{v_1,\ldots,v_q\}$ contains $p$ connected components isomorphic to $P_{2k-1}$. We know that the eigenvalues of $A(P_{2k-1})$ consists of $\theta_i:=2\cos(\pi i/(2k+1))$ for $i=1,\ldots,2k$ (see \cite[p.~9]{bh}). 
This means that $p$ consecutive eigenvalues of $A(\widetilde{G}\setminus\{v_1,\ldots,v_q\})$, say $\eta_m,\ldots,\eta_{m+p-1}$, are all equal to $\theta_i$.
  By interlacing (see \cite[Corollary~2.5.2]{bh}), we have
  $$\theta_i=\eta_j\ge\la_{j+q}\ge\eta_{j+q}=\theta_i~~~~\hbox{for}~j=m,\ldots,m+p-q-1.$$
  It follows that $\theta_i$ is an eigenvalue of $A(\widetilde{G})$ with multiplicity at least $p-q$. So we are done by Lemma~\ref{subdivid}. Note that $\theta_i=-\theta_{2k+1-i}$.

We now turn to Laplacian eigenvalues.  Let $G'$ be the induced subgraph of $G$ by the vertices of theses $p$ paths. Note that $v_1,\ldots,v_q$ belong to  $G'$.
Suppose that the edges of $G$ have been oriented with associate directed incidence matrix $D=D(G)$ and   directed subdivision graph $\widetilde{G}_o$.
Consider $\widetilde{G}_o\setminus\{v_1,\ldots,v_q\}$.
This graph has at least $p+1$ connected components, $p$ of which are paths $P_{2k-1}$ with some orientations. Let $H$ be the union of these $p$ directed paths.
 Let $D_1$ and $X_1$ be the submatrices of $D(G')$ and $X(G')$, respectively, obtained by removing the rows corresponding to $v_1,\ldots,v_q$.
 Then
  \begin{equation}\label{d1}\begin{pmatrix}O&D_1\\D_1^\top&O\end{pmatrix}\end{equation}
   is the
 (signed) adjacency matrix $H$. As $G'$ is bipartite, by Lemma~\ref{ddt}, \eqref{d1} is similar to
 $$\begin{pmatrix}O&X_1\\X_1^\top&O\end{pmatrix}=A(pP_{2k-1}).$$
So each of $\theta_1,\ldots,\theta_{2k}$ is an eigenvalue of \eqref{d1} and so an eigenvalue of the (signed) $A(H)$ with multiplicity at least $p$.
By interlacing, each of $\theta_1,\ldots,\theta_{2k}$ is an eigenvalue of
$$\begin{pmatrix}O&D\\D^\top&O\end{pmatrix}$$
 with multiplicity at least $p-q$.
 Therefore, by Lemma~\ref{subdivid}, each of $\theta_1^2,\ldots,\theta_k^2$  is an eigenvalue of $DD^\top=L(G)$ with multiplicity at least $p-q$.
 }\end{proof}

\section*{Acknowledgments}
I would like to thank the referee for her/his comments which improved the presentation of the paper.
 The research of the author was in part supported by a grant from IPM (No. 94050114).

{}

\end{document}